\title[Algebraic curves admitting the same Galois closure]{Algebraic curves admitting the same Galois closure for two projections} 
\author{Satoru Fukasawa} 
\address{Department of Mathematical Sciences, Faculty of Science, Yamagata University, Kojirakawa-machi 1-4-12, Yamagata 990-8560, Japan} 
\email{s.fukasawa@sci.kj.yamagata-u.ac.jp} 
\thanks{The first author was partially supported by JSPS KAKENHI Grant Numbers 16K05088 and 19K03438.}
\author{Kazuki Higashine} 
\address{Graduate School of Science and Engineering, Yamagata University, Kojirakawa-machi 1-4-12, Yamagata 990-8560, Japan}
\email{s182102d@st.yamagata-u.ac.jp}
\author{Takeshi Takahashi}
\address{Education Center for Engineering and Technology, Faculty of Engineering, Niigata University, Niigata 950-2181, Japan} 
\email{takeshi@eng.niigata-u.ac.jp}
\thanks{The third author was partially supported by JSPS KAKENHI Grant Numbers 16K05094 and 19K03441.} 
\subjclass[2010]{14H05, 14H37, 14H50}
\keywords{Galois group, Galois closure, automorphism group, plane curve, uniform projection}
\newtheorem{theorem}{Theorem}
\newtheorem{corollary}{Corollary}
\theoremstyle{definition}
\newtheorem{remark}{Remark}
\begin{document}
\begin{abstract} 
A criterion for the existence of a plane model of an algebraic curve such that the Galois closures of projections from two points are the same is presented. 
As an application, it is proved that the Hermitian curve in positive characteristic coincides with the Galois closures of projections of some plane curve from some two non-uniform points. 
\end{abstract}

\maketitle 

\section{Introduction} 
Let $C \subset \mathbb{P}^2$ be an irreducible plane curve of degree $d \ge 4$ over an algebraically closed field $k$ of characteristic $p \ge 0$, and let $k(C)$ be its function field. 
We consider the projection $\pi_P: C \dashrightarrow \mathbb{P}^1$ from a point $P \in \mathbb{P}^2$.
If $\pi_P$ is separable,   
then the Galois closure of $k(C)/\pi_P^*k(\mathbb{P}^1)$ is denoted by $L_P$, and the Galois group is denoted by $G_P$.  
Pirola and Schlesinger said that $P$ is {\it uniform}, if $G_P$ is the full symmetric group (see \cite{pirola-schlesinger}). 
It follows from results of Yoshihara and Pirola--Schlesinger that there exist only finitely many non-uniform points $P \in \mathbb{P}^2$, at least when $p=0$ and $C$ is smooth (see \cite{pirola-schlesinger, yoshihara}). 
The following problem is natural: {\it for different points $P_1$ and $P_2 \in \mathbb{P}^2$, when does $L_{P_1}=L_{P_2}$ hold?} 
However, it has never been considered, except for the case where $L_{P_i}=k(C)$ for $i=1, 2$. 

In this article, we settle this problem. 
Let $X$ be a smooth projective curve. 
For a finite subgroup $H$ of ${\rm Aut}(X)$ and a point $Q \in X$, the quotient map is denoted by $f_H: X \rightarrow X/H$ and the image $f_H(Q)$ is denoted by $\overline{Q}^H$.
We show the following theorem. 

\begin{theorem} \label{maincor}
Let $H$, $G_1$, and $G_2 \subset {\rm Aut}(X)$ be finite subgroups such that $H \subset G_1 \cap G_2$, and let $P_1$ and $P_2 \in X$. 
Then, five conditions
\begin{itemize}
\item[(a)] $X/{G_1} \cong \Bbb P^1$ and $X/{G_2} \cong \Bbb P^1$,    
\item[(b)] $G_1 \cap G_2=H$, 
\item[(c)] $H' \vartriangleleft G_i, \ H' \subset H\Rightarrow H'=\{1\}$, for $i=1, 2$, 
\item[(d)] $\sum_{h \in H}h(P_1)+\sum_{\sigma \in G_1} \sigma (P_2)=\sum_{h \in H}h(P_2)+\sum_{\tau \in G_2} \tau (P_1)$, and
\item[(e)] $HP_1 \ne HP_2$
\end{itemize}
are satisfied, if and only if there exists a birational embedding $\varphi: X/H \rightarrow \mathbb P^2$ of degree $(G_1:H)+1$ such that $\varphi(\overline{P_1}^H)$ and $\varphi(\overline{P_2}^H)$ are different smooth points of $\varphi(X/H)$, and $L_{\varphi(\overline{P_i}^H)}=k(X)$ and $G_{\varphi(\overline{P_i}^H)}=G_i$ for $i=1, 2$. 

\end{theorem}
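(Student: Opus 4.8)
The plan is to translate the projective geometry of the plane curve into the Galois theory of the covering $\rho := f_H : X \to Y$, where $Y := X/H$, and then prove the two implications separately. Write $Q_i := \overline{P_i}^H \in Y$ and $d := (G_1:H)+1$, and for $i=1,2$ let $g_i : Y \to X/G_i$ be the natural covering of degree $(G_i:H)$ induced by $H \subset G_i$, so that $f_{G_i} = g_i \circ \rho$. The decisive dictionary entry, which I would establish first by comparing function fields, is that whenever $\varphi$ is a birational embedding we have $k(\varphi(Y)) = k(Y) = k(X)^H$, and under condition (a) the projection $\pi_{\varphi(Q_i)}$ from a smooth point $\varphi(Q_i)$ coincides with the intermediate covering $g_i : X/H \to X/G_i \cong \mathbb{P}^1$; here I use that projection from a smooth point of a degree-$d$ curve has degree $d-1 = (G_i:H)$, and more generally that a point of multiplicity $m$ gives a projection of degree $d-m$.

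For the forward implication I would begin from such a $\varphi$. Condition (a) is immediate, since the target $\mathbb{P}^1$ of $\pi_{\varphi(Q_i)}$ has function field $k(X)^{G_i}$, forcing $X/G_i \cong \mathbb{P}^1$. Because two projections from distinct points of a plane curve jointly define a birational map onto a curve in $\mathbb{P}^1 \times \mathbb{P}^1$, their subfields generate $k(Y)$; thus $k(X)^{G_1}\cdot k(X)^{G_2} = k(X)^{G_1\cap G_2}$ equals $k(X)^H$, which together with $H \subset G_1\cap G_2$ yields (b). Condition (c) follows from the standard fact that the Galois closure of $k(X)^H/k(X)^{G_i}$ inside $k(X)$ is the fixed field of the normal core $\operatorname{core}_{G_i}(H)$: the hypotheses $L_{\varphi(Q_i)}=k(X)$ and $G_{\varphi(Q_i)}=G_i$ force this core to be trivial. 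Condition (e) is just the distinctness of the two centers. The key point is (d): the line $\ell$ through the two centers meets $\varphi(Y)$ in a single divisor whose pullback to $Y$ equals both $Q_1 + g_1^*(\overline{P_2}^{G_1})$ and $Q_2 + g_2^*(\overline{P_1}^{G_2})$; applying $\rho^*$ and using $\rho^* g_i^*(\overline{P_j}^{G_i}) = \sum_{\sigma\in G_i}\sigma(P_j)$ converts this one equality of divisors into (d).

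For the converse I would construct the linear system by hand. Since (d) is an equality of divisors on $X$, and $\rho^*$ is injective on divisors, it descends to a single effective divisor $D := Q_1 + g_1^*(\overline{P_2}^{G_1}) = Q_2 + g_2^*(\overline{P_1}^{G_2})$ on $Y$; comparing degrees in (d) also gives $|G_1|=|G_2|$, so $\deg D = d$ is unambiguous. Using (a) to choose homogeneous coordinates $u_0,u_1$ on $X/G_1$ with $u_1$ vanishing at $\overline{P_2}^{G_1}$ and $v_0,v_1$ on $X/G_2$ with $v_1$ vanishing at $\overline{P_1}^{G_2}$, together with sections $t_1,t_2$ cutting out $Q_1,Q_2$, the two pencils $\langle t_1 g_1^*u_0,\, t_1 g_1^*u_1\rangle$ and $\langle t_2 g_2^*v_0,\, t_2 g_2^*v_1\rangle$ in $H^0(Y,\mathcal{O}(D))$ share a common section cutting out $D$ exactly because of the divisor equality (not mere linear equivalence) in (d). Setting $s_0 = t_1 g_1^*u_0$, $s_1 = t_2 g_2^*v_0$, and $s_2 = t_1 g_1^*u_1 = t_2 g_2^*v_1$ defines $\varphi = [s_0:s_1:s_2] : Y \to \mathbb{P}^2$; using (e) to see $t_1(Q_2),t_2(Q_1)\neq 0$, one checks that $s_0,s_1,s_2$ are independent, that $\varphi$ is base-point-free, that $\varphi(Q_1)=[0:1:0]$ and $\varphi(Q_2)=[1:0:0]$ are distinct, and that $[s_0:s_2]=g_1$ and $[s_1:s_2]=g_2$ realize the projections from these two points.

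The step I expect to be the main obstacle is confirming that this $\varphi$ has the required global properties. I would deduce birationality onto its image from (b): since $[s_0:s_2]=g_1$ and $[s_1:s_2]=g_2$ are recovered from and recover $\varphi$ on the locus $s_2\neq 0$, the map $\varphi$ has the same generic fibers as $(g_1,g_2):Y\to\mathbb{P}^1\times\mathbb{P}^1$, which is birational onto its image precisely because $k(X)^{G_1}\cdot k(X)^{G_2}=k(X)^{G_1\cap G_2}=k(X)^H=k(Y)$ by (b). Birationality with base-point-freeness then forces $\deg\varphi(Y)=\deg D=d$. Finally, since the projection from $\varphi(Q_i)$ equals $g_i$ and hence has degree $(G_i:H)=d-1$, the multiplicity of $\varphi(Q_i)$ on $\varphi(Y)$ is $d-(d-1)=1$, so it is a smooth point; and the Galois closure and group of this projection are $k(X)$ and $G_i$ by the normal-core computation using (c). Assembling these verifications completes the converse and hence the equivalence.
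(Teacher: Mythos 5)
Your proof is correct and follows essentially the same route as the paper's: both directions identify the two projections with the quotient coverings $X/H \to X/G_i$, construct $\varphi=(f:g:1)$ from generators of $k(X)^{G_1}$ and $k(X)^{G_2}$ with pole divisors $D-\overline{P_1}^H$ and $D-\overline{P_2}^H$, obtain birationality from (b) via $k(X)^{G_1}\cdot k(X)^{G_2}=k(X)^{G_1\cap G_2}=k(X)^H$, get smoothness of the centers from the degree count $d-(d-1)=1$, and read off (c) and (d) from the normal core of $H$ in $G_i$ and from the line through the two centers. The differences are only cosmetic (sections of $\mathcal{O}(D)$ and two pencils sharing a member, versus functions with prescribed pole divisors; an explicit appeal to $\mathbb{P}^1\times\mathbb{P}^1$ versus the paper's coordinate choice).
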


Since any plane curve is a quotient curve of the smooth model of the Galois closure at each point, all plane curves $C$ with two different smooth points $P_1$ and $P_2 \in C$ such that $L_{P_1}=L_{P_2}$ are described completely in Theorem \ref{maincor}.  

In Section 2, we prove a generalization of Theorem \ref{maincor}, to understand the proof in more general setting. 
In Section 3, we show the following theorem for the Hermitian curve, which may be surprising.

\begin{theorem} \label{hermitian} 
Let $p>0$, $q$ be a power of $p$, and let a positive integer $m$ divide $q^2-1$. 
The Hermitian curve defined by 
$$ X^qZ+XZ^q-Y^{q+1}=0 $$  
is denoted by $X$. 
Then, there exists a plane curve $C \subset \mathbb{P}^2$ of degree $q^3+1$ and different smooth points $P_1$ and $P_2$ exist for $C$ such that $L_{P_1}=L_{P_2}=k(X)$ and $G_{P_i} \cong N_1 \rtimes C_m$ for $i=1, 2$, where
$N_1$ is a Sylow $p$-group of ${\rm Aut}(X)$ and $C_m$ is a cyclic group of order $m$.      
In particular, points $P_1$ and $P_2$ are not uniform. 
\end{theorem}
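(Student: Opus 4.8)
The plan is to deduce Theorem \ref{hermitian} from Theorem \ref{maincor} by exhibiting subgroups $H, G_1, G_2\subset{\rm Aut}(X)$ and points $P_1,P_2\in X$ satisfying conditions (a)--(e). Recall that ${\rm Aut}(X)\cong{\rm PGU}(3,q)$ has order $q^3(q^3+1)(q^2-1)$ and acts $2$-transitively on the $q^3+1$ points of $X$ rational over $\mathbb{F}_{q^2}$; in the affine chart $Z=1$ the curve is $x^q+x=y^{q+1}$. I would take $P_1=[1:0:0]$ and $P_2=(0,0)$. Let $T={\rm Stab}(P_1)\cap{\rm Stab}(P_2)=\{(x,y)\mapsto(\lambda^{q+1}x,\lambda y):\lambda\in\mathbb{F}_{q^2}^{\times}\}\cong C_{q^2-1}$, and let $H=C_m$ be its unique subgroup of order $m$ (here $m\mid q^2-1$). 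Let $N_1,N_2$ be the Sylow $p$-subgroups (unipotent radicals) of ${\rm Stab}(P_1),{\rm Stab}(P_2)$, each of order $q^3$; explicitly $N_1=\{(x,y)\mapsto(x+a^qy+c,\,y+a):a,c\in\mathbb{F}_{q^2},\,c^q+c=a^{q+1}\}$. Since $H\subset T$ normalizes each $N_i$, I set $G_i=N_i\rtimes C_m$, so that $H\subset G_1\cap G_2$ and $|G_i|=q^3m$, and $N_i$ is a Sylow $p$-subgroup of ${\rm Aut}(X)$.

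Conditions (e), (c), (b) are then checked directly. For (e), $H$ fixes both points, so $HP_1=\{P_1\}\ne\{P_2\}=HP_2$. For (c), the conjugation formula $t_\lambda n_{a,c}t_\lambda^{-1}=n_{\lambda a,\,\lambda^{q+1}c}$ shows that no nontrivial element of $T$ (hence of $C_m$) centralizes $N_i$; since a subgroup $H'\subseteq C_m$ is normal in $G_i=N_i\rtimes C_m$ precisely when every element of $H'$ centralizes $N_i$, condition (c) follows. For (b), one has $G_1\cap G_2\subseteq{\rm Stab}(P_1)\cap{\rm Stab}(P_2)=T$, and $T\cap G_1$ is a $p'$-subgroup of $G_1$, so its order divides the $p'$-part $m$ of $|G_1|$; as it contains $H$, it equals $H$, whence $G_1\cap G_2=H$.

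The divisor identity (d) is where the geometry of $X$ enters, through the key fact that $N_1$ acts \emph{simply transitively} on the $q^3$ rational points other than $P_1$. Consequently the orbit $G_1P_2$ is exactly the set of rational points distinct from $P_1$, each with stabilizer $H$ in $G_1$, so $\sum_{\sigma\in G_1}\sigma(P_2)=m\sum_{Q\ne P_1}Q$, while $\sum_{h\in H}h(P_1)=mP_1$; thus the left-hand side of (d) equals $m$ times the sum of all $q^3+1$ rational points. Applying the symmetric statement for $N_2$ to the right-hand side yields the same divisor, so (d) holds.

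The remaining condition (a) is the main obstacle. I would first prove $X/N_1\cong\mathbb{P}^1$: the group $N_1$ fixes only $P_1$, where the cover is totally and wildly ramified, and Riemann--Hurwitz applied with the known higher-ramification filtration of $N_1$ at $P_1$ forces genus $0$ (this wild-ramification computation is the technical heart of the argument). Then $X/G_1=(X/N_1)/(G_1/N_1)\cong\mathbb{P}^1/C_m\cong\mathbb{P}^1$, since every quotient of $\mathbb{P}^1$ by a finite group is rational. By $2$-transitivity there is $g\in{\rm PGU}(3,q)$ interchanging $P_1$ and $P_2$; it satisfies $gG_1g^{-1}=G_2$, so $X/G_2\cong X/G_1\cong\mathbb{P}^1$, giving (a). With (a)--(e) verified, Theorem \ref{maincor} produces a birational embedding $\varphi:X/H\to\mathbb{P}^2$ of degree $(G_1:H)+1=q^3+1$ whose images $\varphi(\overline{P_i}^H)$ are distinct smooth points with $L_{\varphi(\overline{P_i}^H)}=k(X)$ and $G_{\varphi(\overline{P_i}^H)}=G_i\cong N_1\rtimes C_m$. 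Taking $C=\varphi(X/H)$ and renaming these centers $P_1,P_2$ finishes the construction; since $|G_{P_i}|=q^3m<(q^3)!$, the group $G_{P_i}$ is not the symmetric group of the $q^3$ sheets, so $P_1$ and $P_2$ are non-uniform.
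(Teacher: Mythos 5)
Your proposal is correct and follows essentially the same route as the paper: the same points $P_1=(1:0:0)$, $P_2=(0:0:1)$, the same groups $N_i$, $C_m\subset C_{q^2-1}$, $G_i=N_i\rtimes C_m$, and the same verification of conditions (a)--(e) before invoking Theorem \ref{maincor}. The only real difference is cosmetic: for (a) the paper simply cites the known fact that $X/N_i\cong\mathbb{P}^1$ and applies L\"uroth, whereas you sketch (but do not carry out) a Riemann--Hurwitz computation, and your arguments for (b) and (c) are mild variants of the paper's stabilizer and conjugation computations.
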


\section{Proof of the main theorem} 
In this section, we prove Theorem \ref{maincor} and its generalization. 
If $H$ is a normal subgroup of a subgroup $G \subset {\rm Aut}(X)$, then there exists a natural homomorphism $G \rightarrow {\rm Aut}(X/H); \sigma \mapsto \overline{\sigma}^H$, where $\overline{\sigma}^H$ corresponds to the restriction $\sigma^*|_{k(X)^H}$.  
The image is denoted by $\overline{G}^H$, which is isomorphic to $G/H$. 
The following is a generalization of Theorem \ref{maincor}, since the case where $N_1=N_2=\{1\}$ implies Theorem \ref{maincor}. 

\begin{theorem} \label{main} 
Let $N_1$, $N_2$, $H$, $G_1$, and $G_2 \subset {\rm Aut}(X)$ be finite subgroups such that $N_i \subset H \subset G_i$ and $N_i \vartriangleleft G_i$ for $i=1, 2$, and let $P_1$ and $P_2 \in X$. 
Then, five conditions
\begin{itemize}
\item[(a)] $X/{G_1} \cong \Bbb P^1$ and $X/{G_2} \cong \Bbb P^1$,    
\item[(b)] $G_1 \cap G_2=H$, 
\item[(c)] $H' \vartriangleleft G_i$, $N_i \subset H' \subset H$ $\Rightarrow$ $H'=N_i$, for $i=1, 2$, 
\item[(d)] $\sum_{h \in H}h(P_1)+\sum_{\sigma \in G_1} \sigma (P_2)=\sum_{h \in H}h(P_2)+\sum_{\tau \in G_2} \tau (P_1)$, and
\item[(e)] $HP_1 \ne HP_2$
\end{itemize}
are satisfied, if and only if there exists a birational embedding $\varphi: X/H \rightarrow \mathbb P^2$ of degree $(G_1:H)+1$ such that $\varphi(\overline{P_1}^H)$ and $\varphi(\overline{P_2}^H)$ are different smooth points of $\varphi(X/H)$, and $L_{\varphi(\overline{P_i}^H)}=k(X)^{N_i}$ and $G_{\varphi(\overline{P_i}^H)}=\overline{G_i}^{N_i}$ for $i=1, 2$. 
\end{theorem}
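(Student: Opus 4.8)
The plan is to translate everything into the language of linear systems on the smooth curve $X/H$, using the correspondence between plane models of $X/H$ and two-dimensional linear systems, and between projection from a smooth point and the pencil it cuts out. The first observation is that condition (d) is an equality of effective divisors on $X$, not merely a linear equivalence, and that comparing degrees in (d) forces $|G_1|=|G_2|$, hence $(G_1:H)=(G_2:H)$. Writing $f_H\colon X\to X/H$ for the quotient map, one has $f_H^*(\overline{P_i}^H)=\sum_{h\in H}h(P_i)$, while $\sum_{\sigma\in G_1}\sigma(P_2)=f_H^*(F_1')$ and $\sum_{\tau\in G_2}\tau(P_1)=f_H^*(F_2')$, where $F_1'$ is the fibre of $X/H\to X/G_1$ through $\overline{P_2}^H$ and $F_2'$ is the fibre of $X/H\to X/G_2$ through $\overline{P_1}^H$. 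Since $f_H^*$ is injective on divisors, (d) is equivalent to the single identity
\[
\overline{P_1}^H+F_1'=\overline{P_2}^H+F_2'=:D
\]
of divisors on $X/H$, of degree $(G_1:H)+1$. This $D$ serves as the hyperplane class of the sought plane model.

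For the forward direction (assume (a)--(e)), I would form the pencils $B_1$ and $B_2$, where the members of $B_1$ are the divisors $\overline{P_1}^H+F$ with $F$ a fibre of $X/H\to X/G_1$, and those of $B_2$ are $\overline{P_2}^H+F$ with $F$ a fibre of $X/H\to X/G_2$; both lie in $|D|$ and contain $D$, and they are distinct by (e). Let $V=\langle B_1,B_2\rangle$, a net. Its base locus lies in ${\rm Bs}(B_1)\cap {\rm Bs}(B_2)=\{\overline{P_1}^H\}\cap\{\overline{P_2}^H\}=\emptyset$ by (e), so $V$ is base-point-free and defines a morphism $\varphi\colon X/H\to\mathbb{P}^2$ with $\varphi^*\mathcal{O}(1)=D$. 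Since $\varphi^*k(\mathbb{P}^2)$ contains $k(X)^{G_1}$ and $k(X)^{G_2}$, it contains $k(X)^{G_1}k(X)^{G_2}=k(X)^{G_1\cap G_2}=k(X)^H$, the last equality by (b); hence $\varphi$ is birational onto its image, whose degree is therefore $\deg D=(G_1:H)+1$. The sub-pencil of $V$ vanishing at $\overline{P_1}^H$ is exactly $B_1$, because a general member of $B_2$ avoids $\overline{P_1}^H$; thus the projection from $\varphi(\overline{P_1}^H)$ coincides with the quotient $X/H\to X/G_1\cong\mathbb{P}^1$ by (a), and, as $\overline{P_1}^H$ occurs with multiplicity one in a general member of $B_1$ and is its only preimage, $\varphi(\overline{P_1}^H)$ is a smooth point, symmetrically for $P_2$. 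Finally, the Galois closure of $k(X)^H/k(X)^{G_1}$ inside $k(X)$ is $k(X)^{H'}$ with $H'={\rm core}_{G_1}(H)=\bigcap_{g\in G_1}gHg^{-1}$; since $N_1\vartriangleleft G_1$ and $N_1\subset H$ give $N_1\subset H'\subset H$ with $H'\vartriangleleft G_1$, condition (c) forces $H'=N_1$, so $L_{\varphi(\overline{P_1}^H)}=k(X)^{N_1}$ and $G_{\varphi(\overline{P_1}^H)}=G_1/N_1=\overline{G_1}^{N_1}$, and likewise for $i=2$.

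For the converse (assume $\varphi$ exists), each condition is read off from the geometry. The identification $L_{\varphi(\overline{P_i}^H)}=k(X)^{N_i}$ and $G_{\varphi(\overline{P_i}^H)}=\overline{G_i}^{N_i}$ forces the base $\mathbb{P}^1$ of the $i$-th projection to be $(k(X)^{N_i})^{G_i/N_i}=k(X)^{G_i}$, giving $X/G_i\cong\mathbb{P}^1$, i.e. (a), and it says the subgroup $H/N_i$ fixing $k(X)^H$ is core-free in $G_i/N_i$, which is exactly (c). The two projections assemble into a map $X/H\to\mathbb{P}^1\times\mathbb{P}^1$ that is birational onto its image, since projection from two distinct points of $\mathbb{P}^2$ separates points off the joining line; hence $k(X)^H=k(X)^{G_1}k(X)^{G_2}=k(X)^{G_1\cap G_2}$, so $G_1\cap G_2=H$, which is (b). Distinctness of the two smooth points gives $\overline{P_1}^H\neq\overline{P_2}^H$, i.e. (e). Condition (d) comes from pulling back the line $\ell$ joining $\varphi(\overline{P_1}^H)$ and $\varphi(\overline{P_2}^H)$: evaluating $f_H^*\varphi^*\ell$ by projecting from $\varphi(\overline{P_1}^H)$ yields $\sum_{h\in H}h(P_1)+\sum_{\sigma\in G_1}\sigma(P_2)$, while projecting from $\varphi(\overline{P_2}^H)$ yields $\sum_{h\in H}h(P_2)+\sum_{\tau\in G_2}\tau(P_1)$, and these two expressions for the same divisor agree.

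I expect the main obstacle to be the forward construction: checking rigorously that the net $V$ built from the two pencils is base-point-free and birational, and that under each projection the cut-out pencil is precisely $B_i$, so that the projection realizes the quotient $X/H\to X/G_i$ and makes the Galois closure come out as $k(X)^{N_i}$. The ramification bookkeeping in $f_H^*(\overline{P_i}^H)=\sum_{h\in H}h(P_i)$ and in the fibre pullbacks, together with the smoothness of $\varphi(\overline{P_i}^H)$, must be handled with care. By contrast, the passage from (d) to the divisor identity for $D$ via injectivity of $f_H^*$, and the identification ${\rm core}_{G_i}(H)=N_i$ from (c), are the clean algebraic hinges on which both directions turn.
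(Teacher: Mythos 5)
Your proposal is correct and follows essentially the same route as the paper's proof: your net $V=\langle B_1,B_2\rangle$ is exactly the paper's linear subsystem $\langle 1,f,g\rangle\subset\mathcal{L}(\overline{D}^H)$ with $(f)_\infty=\overline{D}^H-\overline{P_1}^H$ and $(g)_\infty=\overline{D}^H-\overline{P_2}^H$, birationality comes from the same Galois-theoretic use of (b), the Galois closure is identified via ${\rm core}_{G_i}(H)=N_i$ using (c), and the converse is read off by pulling back the line joining the two image points. The only cosmetic differences are that you derive the divisor identity on $X/H$ from injectivity of $f_H^*$ where the paper pushes forward by $(f_H)_*$, and that you verify (b) in the converse via the birational map to $\mathbb{P}^1\times\mathbb{P}^1$ where the paper uses affine coordinates $x,y$.
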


\begin{proof}
We consider the only-if part. 
By condition (e), $\overline{P_1}^H \ne \overline{P_2}^H$. 
Let $D$ be the divisor as in condition (d). 
Since 
$$\sum_{\sigma \in G_1}\sigma(P_2)=\sum_{H\sigma \in H \setminus G_1}\sum_{h \in H} h\sigma(P_2), $$
it follows that 
$$(f_H)_*D=|H|\left(\overline{P_1}^H+\sum_{H\sigma \in H\setminus G_1}\overline{\sigma(P_2)}^H\right)=|H|\left(\overline{P_2}^H+\sum_{H\tau \in H \setminus G_2}\overline{\tau(P_1)}^H\right)$$
as divisors on $X/H$. 
Therefore, 
$$ \overline{D}^H:=\overline{P_1}^H+\sum_{H\sigma \in H\setminus G_1}\overline{\sigma(P_2)}^H=\overline{P_2}^H+\sum_{H\tau \in H \setminus G_2}\overline{\tau(P_1)}^H. $$
Let $q_i: X/H \rightarrow \mathbb{P}^1$ be the morphism induced by the extension $k(X)^H/k(X)^{G_i}$. 
Note that 
$$f_H^*(\overline{Q}^H)=\sum_{h \in H}h(Q) \ \mbox{ and } \ (q_i \circ f_H)^*(q_i(\overline{Q}^H))=\sum_{\sigma \in G_i}\sigma(Q)$$ for each point $Q \in X$ (see, for example, \cite[III.7.1, III.7.2, III.8.2]{stichtenoth}). 
It follows that $\overline{D}^H-\overline{P_1}^H$ coincides with the pull-back $q_1^*(q_1(\overline{P_2}^H))$, since $(f_H)^*(\overline{D}^H-\overline{P_1}^H)=\sum_{\sigma \in G_1}\sigma(P_2)=(f_H)^*(q_1^*(q_1(\overline{P_2}^H))$, and $(f_H)_*(f_H)^*(D')=|H|D'$ for any divisor $D' \in {\rm Div}(X/H)$. 
Let $f$ and $g \in k(X)^H$ be generators of $k(X)^{G_1}$ and $k(X)^{G_2}$ such that $(f)_{\infty}=\overline{D}^H-\overline{P_1}^H$ and $(g)_{\infty}=\overline{D}^H-\overline{P_2}^H$, by (a), where $(f)_{\infty}$ is the pole divisor of $f$.   
Then, $f, g \in \mathcal{L}(\overline{D}^H)$. 
Let $\varphi: X/H \rightarrow \Bbb P^2$ be given by $(f:g:1)$. 
To prove that $\varphi$ is birational onto its image, we show that $k(X)^H=k(f, g)$. 
Since $k(X)/k(f)$ is Galois, there exists a subgroup $H_1$ of $G_1$ such that $H_1={\rm Gal}(k(X)/k(f, g))$. 
Similarly, there exists a subgroup $H_2$ of $G_2$ such that $H_2={\rm Gal}(k(X)/k(f, g))$. 
Since $G_1 \cap G_2=H$ by condition (b), $H_1=H_2=H$. 
The morphism $\varphi$ is birational onto its image. 
The sublinear system of $|\overline{D}^H|$ corresponding to $\langle f, g, 1\rangle$ is base-point-free, since ${\rm supp}(\overline{D}^H) \cap {\rm supp}((f)+\overline{D}^H)=\{\overline{P_1}^H\}$ and ${\rm supp}(\overline{D}^H) \cap {\rm supp}((g)+\overline{D}^H)=\{\overline{P_2}^H\}$. 
Therefore, $\deg \varphi(X/H)=\deg \overline{D}^H$, and the morphism $(f:1)$ (resp. $(g:1)$) coincides with the projection from the smooth point $\varphi(\overline{P_1}^H) \in \varphi(X/H)$ (resp. $\varphi(\overline{P_2}^H) \in \varphi(X/H)$). 
The Galois closure of $k(X)^H/k(X)^{G_i}$ coincides with $k(X)^{N_i}$, by condition (c).

We consider the if part. 
Since $\pi_{\varphi(\overline{P_i}^H)}^*k(\mathbb{P}^1)=(k(X)^{N_i})^{\overline{G_i}^{N_i}}=k(X)^{G_i}$, condition (a) is satisfied. 
Since $k(X)^{N_i}$ is the Galois closure of $k(X)^H/k(X)^{G_i}$, conditions (c) is satisfied. 
By the assumption, $H \subset G_1 \cap G_2$. 
To prove (b), we take a suitable system of coordinates so that $\varphi(\overline{P_1}^H)=(0:1:0)$ and $\varphi(\overline{P_2}^H)=(1:0:0)$. 
Then, $k(X)^{G_1}=k(x)$ and $k(X)^{G_2}=k(y)$. 
For $\sigma \in G_1 \cap G_2$, $\sigma^*(x)=x$ and $\sigma^*(y)=y$. 
Since $k(X)^H=k(x,y)$, $\sigma \in H$. 
Condition (b) is satisfied. 
Since $\varphi(\overline{P_1}^H) \ne \varphi(\overline{P_2}^H)$, condition (e) is satisfied. 
Let $\overline{D}^H$ be the divisor induced by the intersection of $\varphi(X/H)$ and the line $\ell:=\overline{\varphi(\overline{P_1})\varphi(\overline{P_2})}$, where $\overline{\varphi(\overline{P_1})\varphi(\overline{P_2})}$ is the line passing through points $\varphi(\overline{P_1})$ and $\varphi(\overline{P_2})$. 
We can consider the line $\ell$ as a point in the images of $\pi_{\varphi(\overline{P_1}^H)}\circ\varphi$ and $\pi_{\varphi(\overline{P_2}^H)}\circ\varphi$. 
Since $\overline{P_2}^H \in \varphi^{-1}(\ell)$, 
$$ (\pi_{\varphi(\overline{P_1}^H)} \circ \varphi)^*\ell=\sum_{H\sigma \in H\setminus G_1}\overline{\sigma(P_2)}^H. $$
Since this divisor coincides with $\overline{D}^H-\overline{P_1}^H$, it follows that 
$$ \overline{D}^H=\overline{P_1}^H+\sum_{H\sigma \in H\setminus G_1}\overline{\sigma(P_2)}^H=\overline{P_2}^H+\sum_{H\tau \in H\setminus G_2}\overline{\tau(P_1)}^H. $$
Considering the divisor $f_H^*(\overline{D}^H)$, condition (d) is satisfied.  
\end{proof}

\begin{remark}
A similar result holds for ``outer'' points. 
In this case, we consider a $6$-tuple $(G_1, G_2, H, N_1, N_2, Q)$ with $Q \in X$ such that $G_1$, $G_2$, $H$, $N_1$ and $N_2$ satisfy conditions (a), (b) and (c), and $\sum_{\sigma \in G_1}\sigma(Q)=\sum_{\tau \in G_2}\tau(Q)$ holds. 
\end{remark}

\section{Examples}
In this section, we assume that the characteristic $p$ is positive and $q$ is a power of $p$. 
The finite field of $q^2$ elements is denoted by $\mathbb{F}_{q^2}$. 
We consider the Hermitian curve $X \subset \mathbb{P}^2$ of degree $q+1$, which is defined by $X^qZ+XZ^{q}-Y^{q+1}=0$. 

\begin{proof}[Proof of Theorem \ref{hermitian}]
Let $P_1=(1:0:0)$ and let $P_2=(0:0:1) \in X$. 
We consider subgroups 
$$N_1=\{\sigma_{a, b}: (X:Y:Z) \mapsto (X+a^qY+b Z:Y+a Z:Z) 
 \ | \ a, b \in \mathbb{F}_{q^2}, b^q+b=a^{q+1} \}, $$
$$N_2=\{(X:Y:Z) \mapsto (X: a X+Y: b X+a^q Y+Z) 
 \ | \ a, b \in \mathbb{F}_{q^2}, b^q+b=a^{q+1} \}, 
$$
and 
$$ C_{q^2-1}=\{\eta_c: (X: Y: Z) \mapsto (c^{q+1}X: c Y :Z) \ | \ c \in \mathbb{F}_{q^2} \setminus \{0\} \} $$
of ${\rm Aut}(X)$. 
Let $H$ be a subgroup of $C_{q^2-1}$ and let $G_i=\langle N_i, H \rangle \cong N_i \rtimes H$. 
We prove that conditions (a), (b), (c), (d) and (e) in Theorem \ref{maincor} are satisfied for $5$-tuple $(G_1, G_2, H, P_1, P_2)$. 
Since it is known that $X/N_i \cong \mathbb{P}^1$, by L\"{u}roth's theorem, condition (a) is satisfied. 
Since $G_1 \cap G_2 \subset \{\sigma \in G_1 \ | \ \sigma(P_2)=P_2\}=H$, condition (b) is satisfied. 
Since $(\sigma_{a, b}^{-1} \eta_{c} \sigma_{a, b})^*y=c y+a(c-1)$ for $\sigma_{a, b} \in N_1$ and $\eta_c \in H$, $H$ does not contain a normal subgroup of $G_1$ other than $\{1\}$.  
Condition (c) is satisfied. 
It is well known that the cardinality of the set $X(\mathbb{F}_{q^2})$ of all $\mathbb{F}_{q^2}$-rational points of $X$ is equal to $q^3+1$, and $N_i$ acts on the set $X(\mathbb{F}_{q^2}) \setminus \{P_i\}$ transitively for $i=1, 2$. 
Since 
$$ \sum_{h \in H}h(P_1)+\sum_{\sigma \in G_1}\sigma(P_2)=\sum_{Q \in X(\mathbb{F}_{q^2})} |H|Q=\sum_{h \in H}h(P_2)+\sum_{\tau \in G_2}\tau(P_1),  $$
condition (d) is satisfied. 
Since $HP_1=\{P_1\} \ne \{P_2\}=HP_2$, condition (e) is satisfied. 
The proof of Theorem \ref{hermitian} is completed. 
\end{proof}

If $ms=q+1$ and $c$ is a primitive $(q^2-1)$-th root of unity, then the subgroup $C_m$ of $C_{q^2-1}$ of order $m$ is generated by $\eta_{c^{s(q-1)}} \in C_{q^2-1}$. 
Since $\eta_{c^{s(q-1)}}^*(x)=x$ and $\eta_{c^{s(q-1)}}^*(y^m)=y^m$, the quotient curve $X/C_m$ has a plane model defined by $x^q+x=y^s$. 
Therefore, the following holds. 

\begin{corollary} 
Let $ms=q+1$, and let $X$ be the Hermitian curve of degree $q+1$.  
Then, for the curve $x^q+x=y^s$, there exist a plane model $C$ of degree $q^3+1$ and different smooth points $P_1$ and $P_2 \in C$ such that $L_{P_i} \cong k(X)$ and $G_{P_i} \cong N_1 \rtimes C_m$ for $i=1, 2$.     
\end{corollary}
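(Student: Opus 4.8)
The plan is to obtain the corollary as the special case $H = C_m$ of Theorem \ref{hermitian}, supplemented by an explicit identification of the quotient curve $X/C_m$. First I would take $H$ to be the cyclic subgroup of order $m$ inside $C_{q^2-1}$; it exists and is unique because $C_{q^2-1}$ is cyclic of order $q^2-1$ and $m \mid q+1 \mid q^2-1$. Setting $G_i = \langle N_i, H\rangle \cong N_i \rtimes C_m$, the verification of conditions (a)--(e) of Theorem \ref{maincor} performed in the proof of Theorem \ref{hermitian} applies verbatim to the $5$-tuple $(G_1, G_2, H, P_1, P_2)$, since that argument is valid for an arbitrary subgroup $H$ of $C_{q^2-1}$. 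Theorem \ref{maincor} then yields a birational embedding $\varphi \colon X/C_m \to \mathbb{P}^2$ with two distinct smooth points $\varphi(\overline{P_1}^H)$ and $\varphi(\overline{P_2}^H)$ on $\varphi(X/C_m)$ satisfying $L_{\varphi(\overline{P_i}^H)} = k(X)$ and $G_{\varphi(\overline{P_i}^H)} = G_i \cong N_1 \rtimes C_m$ for $i = 1, 2$; here the identification $N_2 \rtimes C_m \cong N_1 \rtimes C_m$ comes from the conjugacy of the Sylow $p$-subgroups $N_1$ and $N_2$ in ${\rm Aut}(X)$.

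Next I would pin down the degree. Because $G_i = N_i \rtimes H$, one has $(G_1 : H) = |N_1|$, and counting pairs $(a, b) \in \mathbb{F}_{q^2} \times \mathbb{F}_{q^2}$ with $b^q + b = a^{q+1}$ gives $|N_1| = q^3$: the map $b \mapsto b^q + b$ is the surjective trace ${\rm Tr}_{\mathbb{F}_{q^2}/\mathbb{F}_q}$, so for each of the $q^2$ values of $a$ there are exactly $q$ admissible $b$. Hence $\deg \varphi(X/C_m) = (G_1 : H) + 1 = q^3 + 1$, and $C := \varphi(X/C_m)$ is a plane curve of the stated degree.

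It then remains to recognize $X/C_m$ as the curve $x^q + x = y^s$, which is the content of the computation preceding the corollary. Writing the Hermitian curve affinely as $x^q + x = y^{q+1}$, the chosen generator $\eta_{c^{s(q-1)}}$ of $C_m$ fixes $x$ and scales $y$ by a primitive $m$-th root of unity, whence $x, y^m \in k(X)^{C_m}$. The inclusions $k(x, y^m) \subseteq k(X)^{C_m} \subseteq k(X) = k(x, y)$, combined with $[k(X) : k(X)^{C_m}] = |C_m| = m$ and $[k(x, y) : k(x, y^m)] \le m$, force $k(X)^{C_m} = k(x, y^m)$; substituting $y^m$ as the new affine coordinate and using $ms = q+1$ turns $x^q + x = y^{q+1} = (y^m)^s$ into $x^q + x = y^s$. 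Therefore $C$ is a plane model of $x^q + x = y^s$ carrying the two desired points, which finishes the proof.

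I expect the main obstacle to lie in this last identification: one must confirm not merely that $x$ and $y^m$ are invariant but that they generate the full invariant field $k(X)^{C_m}$, and that the resulting model $x^q + x = y^s$ is irreducible of the expected degree, so that the birational class of $X/C_m$ is genuinely that of $x^q + x = y^s$. Once this is settled, the remainder is a direct specialization of Theorem \ref{hermitian} with no further analytic input.
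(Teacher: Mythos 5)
Your proposal is correct and follows essentially the same route as the paper: specialize the proof of Theorem \ref{hermitian} to $H=C_m$ and identify $X/C_m$ with the curve $x^q+x=y^s$ via the invariants $x$ and $y^m$ of the generator $\eta_{c^{s(q-1)}}$. The extra details you supply (the count $|N_1|=q^3$ giving degree $q^3+1$, and the index argument showing $k(X)^{C_m}=k(x,y^m)$) are exactly the steps the paper leaves implicit.
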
 

\begin{remark}
A result similar to Theorem \ref{hermitian} holds for the rational, Suzuki or Ree curve (see \cite[Sections 12.2 and 12.4]{hkt} for the properties of the Suzuki or Ree curves).  
\end{remark}

\end{document}